\DeclareMathAlphabet{\Ma}{U}{msa}{m}{n}
\DeclareMathAlphabet{\Mb}{U}{msb}{m}{n}
\DeclareMathAlphabet{\Meuf}{U}{euf}{m}{n}
\DeclareSymbolFont{ASMa}{U}{msa}{m}{n}
\DeclareSymbolFont{ASMb}{U}{msb}{m}{n}
\def\mr #1.{\mathrm{#1\,}}
\def\mrt #1.{\mathrm{\mbox{\tiny #1\,}}}
\def\mt #1.{{\mbox{\tiny $#1$}}}
\def\ms #1.{{\mbox{\small $#1$}}}
\def\ol #1.{\overline{#1}}
\def\C{\mathbb{C}}    \def\R{\mathbb{R}}
  \def\N{\mathbb{N}}
\newcommand{\map}[3]{{#1}\colon{#2}\longrightarrow{#3}} 
\def\1{\mathbbm 1}
\def\EINS{\1}
\def\1{\mathbbm 1}
\def\C{\Mb{C}}
\def\ot #1.{{\got{#1}}}
\def\got#1{\Meuf{#1}}
\def\al #1.{{\mathcal{#1}}}
\theoremstyle{plain}            
 \newtheorem{theorem}{Theorem}[section]
 \newtheorem*{maintheorem*}{Main Theorem}
 \newtheorem*{maintheorem.}{Main Theorem~1'}
 \newtheorem{proposition}[theorem]{Proposition}
 \theoremstyle{definition}       
 \theoremstyle{remark}
 \newtheorem{remark}[theorem]{Remark}
 \newtheorem{example}[theorem]{Example}
 \newtheorem{exercise}[theorem]{Exercise}
\newcommand{\cA}{\mbox{C*-al}\-ge\-bra}
\newfont{\Kcal}{cmsy6 scaled 1000}
\newfont{\Kgot}{eufm6 scaled 1000}
\def\Kbegin{\begin{equation} \left. \begin{array}{rcl}}
\def\Kend{\end{array} \right\} \end{equation}}
\DeclareMathSymbol{\hsemi}{\mathord}{ASMb}{"6E}
\newcommand{\semi}[2]{\mbox{$#1\kern.1em\hsemi\kern.1em#2$}}
\def\vplatz#1{{\rule{0mm}{#1}}}
\def\LA{\left\langle\bgroup}
\def\LE{\left[\bgroup}
\def\LG{\left\{\bgroup}
\def\LR{\left(\bgroup}
\def\RA{\egroup^{\rule{0mm}{0mm}}\right\rangle}
\def\RE{\egroup^{\rule{0mm}{2mm}}\right]}
\def\RG{\egroup^{\rule{0mm}{2mm}}\right\}}
\def\RR{\egroup^{\rule{0mm}{2mm}}\right)}
\def\Ldummy{\left.\bgroup}
\def\Rdummy{\egroup^{\rule{0mm}{2mm}}\right.}
\def\ccr#1{\mbox{{\rm CCR$\left({#1}^{\vplatz{1.5mm}}\right)$}}}
  \def\ccr #1,#2.{\overline{\Delta(#1,\,#2)}}
  \def\b #1.{{\bf #1}}
  \def\cross#1.{\mathrel{\mathop{\times}\limits_{#1}}}
  \def\C{\Mb{C}}
  \def\N{\Mb{N}}
  \def\R{\Mb{R}}
  \def\wwh #1.{\widehat{#1}}
  \def\wt #1.{\widetilde{#1}}
  \def\cross #1.{\mathrel{\raise 3pt\hbox{$\mathop\times\limits_{#1}$}}}
\def\set #1,#2.{\left\{\,#1\;\bigm|\;#2\,\right\}}
\def\b #1.{{\bf #1}}
\def\ol #1.{\overline{#1}}
\def\rn#1.{\romannumeral{#1}}
\def\s #1.{_{\smash{\lower2pt\hbox{\mathsurround=0pt $\scriptstyle #1$}}\mathsurround=3pt}}
\def\bra #1,#2.{{\left\langle #1,\,#2\right\rangle_{\al A.}}}
\def\XP#1!{\renewcommand{\baselinestretch}{.7}\marginpar{{\footnotesize #1}\hfil}
\renewcommand{\baselinestretch}{1.5}}
\def\XB{\marginpar{
{\footnotesize\bf Change~starts----}\lower 11pt\hbox{\mathsurround=0pt$
\!\!\displaystyle{
\Bigg\downarrow}$\mathsurround=3pt}}}
\def\XE{\marginpar{{\footnotesize\bf Change~ends-----}\raise 10pt\hbox{\mathsurround=0pt$
\!\!\displaystyle{
\Bigg\downarrow}$\mathsurround=3pt}}}
\newcommand{\Zint}[1]{\int_Z^\oplus{#1}\,dz}
\title{Operator algebras: an informal overview}
\author{Fernando Lled\'o}
\address{Department of Mathematics, University Carlos~III Madrid,
     Avda.~de la Universidad~30, E-28911 Legan\'es (Madrid),
     Spain and Institute for Pure and Applied Mathematics,
     RWTH-Aachen University,
     Templergraben 55, D-52062 Aachen, Germany (on leave)}
\email{flledo@math.uc3m.es {\em and} lledo@iram.rwth-aachen.de}
\date{\today{}}
\begin{document}

\maketitle

\tableofcontents

\begin{abstract} 
In this article we give a short and informal overview of some aspects 
of the theory of C*- and von Neumann algebras. We also mention some
classical results and applications of these families of 
operator algebras.
\end{abstract}

\section{Introduction}

Any introduction to the theory of operator algebras, a subject
that has deep interrelations with many mathematical 
and physical disciplines, will miss out important elements 
of the theory, and this introduction is no exception.
The purpose of this article is to give a brief and informal
overview on C*- and von Neumann algebras which are the 
main actors of this summer school. We will also mention some of the 
classical results in the theory of operator algebras that 
have been crucial for the development of several areas in mathematics
and mathematical physics. Being an overview we can not provide
details. Precise definitions, statements and examples can be found in
\cite{pALP08} and references cited therein. The main aim 
of this article is to illustrate in a few pages 
the richness and diversity of possible applications of this topic.
We have also included a few exercises to motivate further thoughts
on the subjects treated.

This article is an extended and modified version of the author's 
introduction to his {\em Habilitation 
Thesis: Operatoralgebraic Methods in Mathematical Physics - Duality of Compact 
Groups and Gauge Quantum Field Theory} 
at the RWTH-Aachen University, Germany (cf.~\cite{hLledo05}).

\section{Operator algebras}\label{SecOA}

We begin with a preliminary definition of the two classes of operator algebras that
will be mainly considered in this summer school:
C*-algebras and von Neumann algebras. This definition is concrete in 
the sense that the elements
of the algebra are given as operators on some complex Hilbert space. 
One can also introduce these algebras in an abstract
setting, i.e.~independent of any concrete Hilbert space realization
(see e.g.~\cite{pALP08,bSakai98}).

\subsection{What are operator algebras?}\label{what}

By operator algebras
we mean subalgebras of bounded linear operators on a complex 
Hilbert space $\mathcal{H}$ which are closed under the adjoint operation
$A\to A^*$. We will consider here two main classes 
according to their completeness properties:
\begin{itemize}
\item {\em C*-algebras} are operator algebras
      closed with respect to the uniform topology, i.e.~the topology 
      defined by the operator norm.
\item {\em Von Neumann algebras} are operator 
      algebras closed with respect to the weak operator topology.
\end{itemize}
From this preliminary definition we can think of these classes
of operator algebras as a rich
algebraic structure on which we impose analytic conditions.
This characteristic union of algebra and analysis reappears in several 
fundamental theorems of the theory. For example, one can
interpret the following statements as a way of having an algebraic 
characterization of certain analytical properties or vice versa:
\begin{itemize}
\item[$\circ$]
The norm of {\em any} element $A$ of a C*-algebra equals
the square root of the spectral radius of the self-adjoint element
$A^*A$, i.e.
\[
 \| A\|=(\mr spr.(A^*A))^\frac12\,.
\]
This fact already implies that there is at most one norm on
a *-algebra making it a C*-algebra. (The crucial property of the
operator norm to show this result is the equation $\| A^*A\|=\| A\|^2$. It
is called C*-property.)
\item[$\circ$]
Any *-homomorphism between unital C*-algebras, 
$\pi\colon\al A._1\to\al A._2$, is automatically
continuous and
\[
 \|\pi(A_1)\|\leq\|A_1\|\;,\quad A_1\in\al A._1\;.
\]
If $\pi$ is injective, then it must be necessarily isometric.
\item[$\circ$]
Von Neumann's double commutant theorem 
says that a nondegenerate *-subalgebra $\al M.$
of bounded linear operators on a Hilbert space $\mathcal{H}$
is weakly closed iff 
\[
 \al M.=\al M.''\,,
\] 
where $\al M.''=(\al M.')'$. (Recall that the commutant $\al M.'$ of
$\al M.$ is the set of all bounded linear operators on $\mathcal{H}$ 
commuting with every operator in  $\al M.$.)
\end{itemize}

\begin{example}
The set of compact operators on a Hilbert space $\mathcal{H}$ or
the set $\al L.(\mathcal{H})$ of bounded
linear operators on $\mathcal{H}$ are examples of C*-algebras realized
concretely on a Hilbert space
(see also \cite{pALP08}).
A source of examples of von Neumann algebras
is provided by the fact that the commutant
of {\em any} self-adjoint
set $\al S.$ in $\al L.(\mathcal{H})$ (i.e.~$T\in\al S.$
implies $T^*\in\al S.$) makes up a von Neumann
algebra (see also Subsection~\ref{OAinGT}). The reason for this
fact lies in the the double commutant theorem mentioned before
and the inclusion 
$\al S.\subset \al S.''$. In fact, from the preceding inclusion it follows
immediately that $(\al S.')=(\al S.')''$.

F.~Riesz was apparently the 
first mathematician to work with the algebra $\al L.(\mathcal{H})$
together with its strong operator topology 
(cf.~\cite[Chapitre~V]{bFRiesz13} and
\cite[Chapter~VII, \S 2 and \S 5]{bDieudonne81}).
\end{example}

\begin{exercise}
Show that any idempotent $P$ (i.e.~$P^2=P$) on the Hilbert space
$\mathcal H$ with $\|P\|=1$ is
self-adjoint, i.e. $P^*=P$. [Hint: Use 
$\sup_{x\in\mathcal{H} } \|Px\|^2=\sup_{x\in\mathrm{ima} P^*} \|Px\|^2$
to show that $\mathrm{ima}\, P^*\subseteq \mathrm{ima}\, P$.] 
(The reverse implication $0\not=P=P^2=P^*\Rightarrow \|P\|=1$ is
easy to show using the C*-property.)
\end{exercise}

\subsection{Differences and analogies between C*- and von Neumann algebras}
\label{subsec:DiffAn}
Although, strictly speaking, any von Neumann algebra is a C*-algebra
(since any von Neumann algebra is automatically closed with respect to the
finer operator norm topology) it is useful to separate 
clearly between these two classes of operator algebras. 
Von Neumann algebras where introduced (as {\em rings of operators})
in 1929 by von Neumann in his second 
paper on spectral theory \cite{vNeumann29}. 
This was twelve years before the first elementary properties
of normed algebras were considered in \cite{Gelfand41}
(see also \cite{DoranIn94}).
The following commutative prototypes also illustrate the different nature
of both families of operator algebras:
\begin{itemize}
\item The space $C_0(X)$ of the continuous 
 functions over a locally compact Hausdorff space $X$ which vanish at infinity
 is an Abelian C*-algebra with complex conjugation as involution and
 norm given by 
\[
\|f\|:=\sup_{x\in X} |f(x)|\,.
\]
\item The space $L^\infty(Z,d\mu)$ of essentially bounded and 
 measurable functions
 for some $\sigma$-finite measure space $(Z,d\mu)$ may be identified
 with an Abelian von Neumann algebra.
 The elements of $L^\infty(Z,d\mu)$ are understood as multiplication 
 operators on the complex Hilbert space
 $\mathcal{H}= L^2(Z,d\mu)$. The measure space $(Z,d\mu)$
 is essentially $[0,1]$ with the Lebesgue measure $d\mu$ or some 
 countable discrete space. 
\end{itemize}

The fact that any von Neumann algebra is, in particular, a C*-algebra
translates in the commutative context to the following result: any
commutative von Neumann algebra $\al A.$ is isomorphic to the algebra
$C(X)$ of continuous functions over an
extremely disconnected compact Hausdorff space $X$.
(Recall, that extremely disconnected means 
that the closure of each open set in $X$ is open (as well as
closed). This implies, in particular, that $X$ is totally disconnected,
i.e.~each pair of points can be separated by sets which are both 
open and closed.)

\begin{exercise}
Show that the Abelian algebra $\al A.=L^\infty(Z,d\mu)\subset\al L.(\mathcal{H})$ 
with $(Z,d\mu)$ a $\sigma$-finite measure space is maximal Abelian, 
i.e.~$\al A.=\al A.'$. Recall that the elements in $L^\infty(Z,d\mu)$ are understood 
as multiplication operators on the complex Hilbert space
$\mathcal{H}:= L^2(Z,d\mu)$. (Hint: Show maximal abelianess
first in the case where $Z$ is a finite measure space, i.e.~$\mu (Z)<\infty$ and extend
then the argument to the $\sigma$-finite situation; recall that the 
measure space is $\sigma$-finite if $Z$ can be decomposed into a countable, disjoint 
union of subsets with finite measure.)
In \cite{pLledo08b} we will give a very short proof of the equation $\al A.=\al A.'$
using Modular Theory.
\end{exercise}

From the preceding Abelian prototypes one can also recognize
the following useful general properties of von Neumann algebras
which are not necessarily true in the context of C*-algebras.
\begin{itemize}
\item[$\circ$] Von Neumann algebras have many projections (even more, they 
         can be generated out of the set of projections)
         and always have an identity. In the Abelian case mentioned
         above the projections are given by multiplication with characteristic 
         functions of measurable sets. In contrast to these facts, if $X$ is a
         Hausdorff locally compact but not compact space, then the identity function is 
         not contained in $C_0(X)$. If, in addition, $X$ is connected 
         then $C_0(X)$ has no nontrivial projections.
\item[$\circ$] Von Neumann algebras can be more easily classified. 
         In fact, von Neumann and his collaborator Murray already
         described a reduction theory for von Neumann algebras 
         to factors (i.e.~von Neumann algebras 
         $\al M.$ having a trivial center: $\al M.'\cap\al M.=\C\1$) 
         and gave a (rough) classification of factors into 
         types~I, II and III. With the help of an essentially unique  
         dimension function on (equivalence classes of) projections, one 
         can refine this classification of factors into 
         type~I$_n$, $n\in\N\cup\{\infty\}$, II$_1$ and II$_\infty$.
         The factors of type~I$_n$, $n\in\N$ and type II$_1$ are 
         called finite (cf.~\cite{pALP08}).
         The finer classification of type~III factors into 
         type~III$_0$, III$_\lambda$, $0< \lambda <1$, and III$_1$
         came much later and used deep results in Modular Theory
         (see e.g.~\cite[Chapter~XII]{bTakesakiII}, \cite{pLledo08b}).
\item[$\circ$] The set of continuous functions $C([0,1])$ is separable
w.r.t. the supremum norm, while $L^\infty(0,1)$ is not. This fact suggests
that in the theory of von Neumann algebras other topologies than the
uniform topology defined by the operator norm are needed.
\end{itemize}

\begin{exercise}
Consider the complex Hilbert space $\al H.=L^2(0,1)$.
Show that the space of all bounded linear operators $\al L.(\al H.)$ 
is not separable w.r.t.~the topology defined
by the operator norm. [Hint: Consider the set of projections
given by multiplication with characteristic functions associated
with the intervals $[0,\lambda]$, with $0\leq\lambda\leq 1$.]
\end{exercise}

\subsection{Relevance of operator algebras}\label{why}

Von Neumann algebras were born in the middle of three
fundamental developments in mathematics: 
the {\em theory of group representations},
{\em Hilbert space theory} including
the study of continuous linear operators,
as well as {\em quantum mechanics}
and the attempts of several mathematicians of that time
to put the emerging theory on a 
firm mathematical footing. 
Some years later von Neumann and Murray
laid the foundation of this field in a series of papers 
on {\em rings of operators} 
(renamed von Neumann algebras by J.~Dixmier and J.~Dieudonn\'e)
(see \cite{vNeumannI,vNeumannII,vNeumannIII,vNeumannIV,vNeumannV,vNeumannVI}
or \cite{pvNeumannIII}). We will recall here some qualified opinions on this
classic series of papers:
\begin{itemize}
\item[]
  ``By the wealth and novelty of their techniques and their results, these
    wonderful papers are certainly the most profound and most difficult
    which von Neumann ever wrote...; they revealed a large number of 
    completely unsuspected phenomena...'' 
    (J.~Dieudonn\'e, 1981)
\item[] 
``...perhaps the most original major work
  in mathematics in the twentieth century.''
  (I.E.~Segal, 1996).
\end{itemize}

It is also worth remembering the 
original motivations of the authors to start a systematic
analysis of von Neumann algebras:
\begin{itemize}
\item[] ``In his earliest work with operators..., von Neumann 
recognized the need for a detailed study of families of operators.
Many of the subtler properties of an operator are to be found only in 
the internal algebraic structure of the algebra of polynomials 
in the operator (and its closures relative to various operator
topologies) or in the action of this algebra on the underlying
Hilbert space. His interest in ergodic theory, group representations,
and quantum mechanics contributed significantly to von Neumann's 
realization that a theory of operator algebras was the next
important stage in the development of this area of mathematics.
The dictates of the subject itself had called for this development.''
\cite[p.~61]{Kadison58}

\item[] ``The problems discussed in this paper arose naturally in 
continuation of the work begun in a paper of one of us ... Their solution
seems to be essential for the further advance of abstract operator theory 
in Hilbert space under several aspects. First, the formal calculus with
operator-rings leads to them. Second, our attempts to generalize the 
theory of unitary group representations essentially beyond their classical
frame have always been blocked by unsolved questions connected with these
problems. Third, various aspects of quantum mechanical formalism suggest
strongly the elucidation of this subject. Fourth, the knowledge obtained
in these investigations gives an approach to abstract algebras without
a finite basis, which seems to differ essentially from all types hitherto
investigated.'' \cite[Introduction]{vNeumannI}

\end{itemize}

These motivations seem to be fully verified and, even more, they still
provide inspiration for present day investigations in functional analysis, harmonic 
analysis and quantum physics. Independently of applications, operator algebras
are of great intrinsic interest. They show various aspects of {\em infinity}
and present fascinating new phenomena like continuous dimensions.

\section{Different ways to think about operator algebras}\label{sec:vNA}

In the present section we will present three different ways one
may look at operator algebras.

\subsection{Operator algebras as non-commutative spaces}

There are structure theorems stated in \cite{pALP08} saying that, essentially,
the prototypes mentioned in Subsection~\ref{subsec:DiffAn} 
\[
 \Big(C_0(X),\|\cdot\|\Big) \quad \mathrm{and} \quad L^\infty(Z,d\mu)
\]
are the only possible commutative examples of C*- and von Neumann algebras,
respectively. In
the context of commutative C*-algebras it is also possible to recapture
the topological space $X$ from the {\em algebraic 
structure} of the set of continuous
functions on $X$ decaying at infinity. It is therefore reasonable to think of
non-commutative C*-algebras as the non-commutative counterpart of
topological spaces. In the same way non-commutative von Neumann algebras 
can be associated with non-commutative measure spaces. The correspondence
\[
 \mathrm{space}\quad\leftrightarrow\quad\mathrm{algebraic~structure}
\]
opens, in the non-commutative setting, 
a wide and difficult field of current research that includes advanced topics 
like non-commutative geometry, non-commutative $L^p$-spaces or quantum groups
(see, for example, \cite{bConnes94,bGraciaBondia01,PisierIn03,Kustermans00b}).

\subsection{Operator algebras as a natural universe for spectral theory}

In the present subsection we will motivate that operator algebras are a natural 
universe for studying properties of a single operator. In fact the following
proposition shows that the fundamental constituents in which one may decompose
a single operator are contained in the corresponding von Neumann algebra.
In other words, von Neumann algebras are stable under natural operations
performed with its elements.

\begin{proposition} 
Let $\al M.\subset\al L.(\al H.)$ be a von Neumann algebra and $M\in\al M.$.
\begin{itemize}
\item[(i)] If $M=V |M|$ is the polar decomposition,
    then $V\in\al M.\ni |M|$. (Recall that $|M|:=(M^*M)^{\mt {\frac12}.}$ is a 
    positive operator and that $V$ is a partial isometry satisfying
    $\mathrm{ker}\,V=\mathrm{ker}\,M$).\\
\item[(ii)] If $M=M^*$ and $M=\int \lambda dE_{\mt M.}(\lambda)$ 
    is the corresponding 
    spectral decomposition of the self-adjoint operator, then for the set
    of spectral projections we have
\[
 \{E_{\mt M.}(B)\mid B\subset\R \;,\; \mr Borel.\}\subset \al M.\;.
\]
\item[(iii)] If $M=M^*$ and $f\in C([-\|M\|,\|M\|])$, then $f(M)$
 is in any C*-algebra containing $M$. In particular, $f(M)\in\al M.$.
\end{itemize}
\end{proposition}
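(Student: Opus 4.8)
The plan is to prove the three parts by exploiting von Neumann's double commutant theorem, which tells us that $\al M.=\al M.''$; equivalently, an element of $\al L.(\al H.)$ lies in $\al M.$ if and only if it commutes with every operator in the commutant $\al M.'$. The strategy throughout is therefore to fix an arbitrary $U'\in\al M.'$ and show that the operator in question ($V$, $|M|$, the spectral projections, or $f(M)$) commutes with $U'$. Since $\al M.'$ is itself self-adjoint, it suffices to work with unitaries $U'\in\al M.'$, as these span the commutant; for a unitary $U'$, commuting with $M$ means $U'MU'^*=M$, and the game is to show the canonical constructions are \emph{equivariant} under the map $A\mapsto U'AU'^*$.

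For part (ii), I would first establish the functional-calculus statement that underlies everything. Fix a unitary $U'\in\al M.'$. Conjugation $A\mapsto U'AU'^*$ is a $*$-automorphism of $\al L.(\al H.)$ fixing $M$, so it fixes every polynomial in $M$ and $M^*$, and by continuity fixes $p(M)$ for continuous $p$; hence $U'f(M)U'^*=f(M)$ for all continuous $f$. Because conjugation by $U'$ sends the spectral measure $E_M$ to the spectral measure of $U'MU'^*=M$, uniqueness of the spectral resolution forces $U'E_M(B)U'^*=E_M(B)$ for every Borel set $B$. Thus each $E_M(B)$ commutes with every unitary in $\al M.'$, hence with all of $\al M.'$, and so $E_M(B)\in\al M.''=\al M.$.

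Part (iii) is then nearly immediate. If $\al A.$ is any C*-algebra containing $M=M^*$, then by the continuous functional calculus $f(M)$ is the norm limit of polynomials $p_n(M)$ with $p_n\to f$ uniformly on the spectrum (which is contained in $[-\|M\|,\|M\|]$); since $\al A.$ contains all such polynomials and is norm-closed, $f(M)\in\al A.$. Taking $\al A.=\al M.$ gives $f(M)\in\al M.$. For part (i), I would recover the polar decomposition pieces from the spectral data: $|M|=(M^*M)^{1/2}=g(M^*M)$ for the continuous function $g(t)=\sqrt t$ on $[0,\|M\|^2]$, so $|M|\in\al M.$ by part (iii) applied to the self-adjoint element $M^*M$. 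For $V$, the cleanest route is again the commutant criterion: for unitary $U'\in\al M.'$ one checks that $U'VU'^*$ together with $U'|M|U'^*=|M|$ furnishes a polar decomposition of $U'MU'^*=M$ with the same kernel condition, so uniqueness of the polar decomposition yields $U'VU'^*=V$, whence $V\in\al M.$.

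The main obstacle is the justification of equivariance for the partial isometry $V$, since $V$ is defined only on $\overline{\mathrm{ima}\,|M|}$ and set to $0$ on $\ker M=\ker|M|$, so one must verify that conjugation preserves the defining relations $M=V|M|$, $\ker V=\ker M$, and the partial-isometry condition simultaneously, and then invoke the \emph{uniqueness} clause of the polar decomposition. A subtlety worth flagging is that reducing from general $A'\in\al M.'$ to unitaries relies on $\al M.'$ being spanned by its unitaries (true for any C*-algebra with identity), and that the spectral-uniqueness and polar-uniqueness theorems are being used as black boxes; modulo those standard facts, each step is routine and the real content is packaging everything through $\al M.=\al M.''$.
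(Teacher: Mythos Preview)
Your proposal is correct and follows essentially the same route as the paper: reduce to unitaries $U'\in\al M.'$ via the double commutant theorem, and then invoke uniqueness of the polar decomposition for (i), uniqueness of the spectral resolution for (ii), and polynomial approximation for (iii). The only cosmetic difference is that the paper obtains $|M|\in\al M.$ directly from the uniqueness of the polar decomposition (together with $V$), whereas you deduce $|M|\in\al M.$ from part (iii) applied to $M^*M$; both are fine.
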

\begin{proof}
We sketch only a few ideas of the proof: to show that any operator is contained in 
the von Neumann algebra $\al M.$, 
it is enough to verify that it commutes with all unitaries $U'\in\al M.'$.
To prove (i) note that for any $U'\in\al M.'$ we have
\[
 V |M| = M = U'\,M\, (U')^* =  (U'\, V \, (U')^*)(U'\,|M|\,(U')^*)\;.
\]
From the uniqueness of the polar decomposition we conclude that
\[
 (U'\, V \, (U')^*)=V\quad\mr and. \quad U'\,|M|\,(U')^*=|M|
 \quad\mr {for~all}.\quad U'\in\al M.'\;,
\]
hence $V,|M|\in\al M.''=\al M.$. Item (ii) is shown similarly using
the uniqueness of the spectral decomposition of self-adjoint operators.
For (iii) take a sequence $p_n$ of polynomials approximating $f$ in the
sup-norm. Then it follows that $p_n(M)\in\al M.$ approximates in the 
operator-norm the operator $f(M)$. Hence $f(M)$ is in any C*-algebra containing $M$.
Since any von Neumann algebra is also
closed with respect to the operator norm we conclude that 
$f(M)\in\al M.$.
\end{proof}

The precedent proposition implies that any von Neumann algebra is generated
as a norm closed subspace by the set of the spectral projections corresponding
to its self-adjoint elements.

\begin{exercise}
Let $f\in C([0,1])\subset L^\infty(0,1)$. What is the polar decomposition of 
$f$? Note that, in general, the corresponding partial isometry is contained
in $L^\infty(0,1)$ but not in $C([0,1])$.
\end{exercise}

\subsection{Von Neumann algebras as symmetry algebras}
\label{OAinGT}
Kadison suggests in \cite[\S~2]{KadisonIn90} that von Neumann
algebras grew initially out of the early period of group representations.
In particular, Schur's characterization of irreducible representations
in terms of commutants, Peter-Weyl's theory of compact groups as 
well as Wedderburn's structural results for matrix algebras were 
a motivational background in the systematic study of von Neumann algebras.

As already stated before, commutants of an arbitrary 
self-adjoint set of bounded operators in a Hilbert space,
provide a rich source of examples of von Neumann algebras.
In particular, if $U$ is a unitary representation of a 
group $\al G.$ on a complex Hilbert space $\al H.$, then 
the intertwiner space of the representation 
\[
 (U,U):=\{U_g\mid g\in\al G.\}'\subset\al L.(\al H.)
\]
is a von Neumann algebra. Even more, {\em any} 
von Neumann algebra $\al N.$ arises in this way.
(Take the group of all unitaries in $\al N.'$.)
Therefore, von Neumann algebras may be seen as 
{\em symmetry algebras} of unitary group representations 
on a Hilbert space. This fact partially explains
why these structures have been so successfully applied
in many branches of mathematics and theoretical physics.
The previous observation also shows that the unitary
representation theory of groups is deeply related to 
the theory of operator algebras. For example, one says that
a unitary representation $U$ of a group $\al G.$
is primary if the von Neumann
algebra $(U,U)$ is a factor, i.e.~if
\[
  \{U_g\mid g\in\al G.\}'\cap\{U_g\mid g\in\al G.\}''=\C\1\,.
\]
Moreover, the classification theory of factors mentioned above 
can be directly applied to the classification of primary representations
(see \cite[Chapter~1]{bMackey76} and \cite[Part~II]{bDixmier77}
for further details).

Some miscellaneous statements that show the close relation
between group theory and von Neumann algebras are: 

\begin{itemize}
\item[$\circ$] 
A group $\al G.$ is of type~I iff for every continuous unitary 
representation $U$ of $\al G.$ the von Neumann algebra generated
by this representation, i.e.~$\{U_g\mid g\in\al G.\}''$,
is of type~I.

\item[$\circ$]
There exists a bijective correspondence between the continuous 
unitary representations of a locally compact group 
$\al G.$ and the non-degenerate 
representation of the group C*-algebra $C^*(\al G.)$.
(Recall that the group C*-algebra of $\al G.$ is the enveloping
C*-algebra of the convolution algebra $L^1(G)$.)

\end{itemize}

\section{Some classical results}\label{SecHistory}
In the present section we recall some classical applications of operator
algebras in mathematics and mathematical physics.

\subsection{Operator algebras in functional analysis}\label{OAinFA}
At the heart of the following results lies the structure theorem
for commutative C*- and von Neumann algebras. 

\subsubsection{Spectral theorem}
An immediate success of operatoralgebraic 
methods in functional analysis was the
proof of the spectral theorem for bounded as well as {\em unbounded}
normal operators on a Hilbert space (cf.~\cite{vNeumann29}). 
The spectral theorem is a generalization
of the elementary result that any normal linear operator on $\C^n$ is unitary
equivalent to a diagonal matrix. It can be stated in 
many ways (see e.g.~\cite{bReedI} or \cite[\S 17.4]{bNeumark90})). 
One of them says that any normal operator is equivalent to a 
multiplication operator. In applications the spectral theorem is often 
stated in terms of the spectral resolution $E(\cdot)$ of a self-adjoint operator.
(Recall that the orthogonal projections $\{E(\lambda)\}_\lambda$
satisfy the usual properties of monotonicity, 
right continuity and completeness.) For additional comments and results
concerning the spectral
theorem see \cite[{\S}9]{bDunfordII} and references therein.
\begin{theorem}
For any self-adjoint operator $T$ on a complex Hilbert space $\mathcal{H}$
there is a unique spectral resolution $E_T(\cdot)$ such that
\[
 T=\int_{\mr sp.(T)} \;\lambda \,dE_\mt T.(\lambda)\;.
\]
Here, $\mr sp.(T)$ denotes the spectrum of the operator $T$ and 
the right-hand integral is a Riemann-Stieltjes integral.
\end{theorem}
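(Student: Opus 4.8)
The plan is to construct the spectral resolution $E_\mt T.$ from the continuous functional calculus already established in the Proposition, and then to identify the integral as a norm limit of Riemann--Stieltjes sums. I would treat the bounded case first, where $\mr sp.(T)\subset\R$ is compact, and reduce the unbounded case to it at the end via the Cayley transform. Since $T=T^*$, the unital C*-algebra generated by $T$ is commutative, so by the structure theorem for commutative C*-algebras it is isometrically $*$-isomorphic to $C(\mr sp.(T))$, with $T$ corresponding to the identity function $\lambda\mapsto\lambda$; this is precisely the isometric calculus $f\mapsto f(T)$ of item~(iii) of the Proposition.

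To produce projections I would first extend this calculus to bounded Borel functions. For fixed $x,y\in\al H.$ the map $f\mapsto\langle f(T)x,y\rangle$ is a bounded linear functional on $C(\mr sp.(T))$, so the Riesz representation theorem furnishes a unique regular complex Borel measure $\mu_{x,y}$ with $\langle f(T)x,y\rangle=\int f\,d\mu_{x,y}$. Setting $\langle g(T)x,y\rangle=\int g\,d\mu_{x,y}$ for bounded Borel $g$ then defines a bounded operator $g(T)$ with $\|g(T)\|\le\|g\|_\infty$, and $g\mapsto g(T)$ is a $*$-homomorphism extending the continuous calculus. I would then set $E_\mt T.(B):=\chi_B(T)$ for every Borel set $B\subset\R$, where $\chi_B$ is the indicator function; since $\chi_B$ is real and idempotent, each $E_\mt T.(B)$ is an orthogonal projection, and the axioms of a spectral resolution ($E_\mt T.(\R)=\1$, $E_\mt T.(\emptyset)=0$, $E_\mt T.(B_1\cap B_2)=E_\mt T.(B_1)E_\mt T.(B_2)$, and countable additivity in the strong operator topology) follow from the corresponding properties of the indicators together with the countable additivity of the scalar measures $\mu_{x,y}$.

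To recover $T$ I would approximate the identity function $\lambda\mapsto\lambda$ uniformly on the compact set $\mr sp.(T)$ by simple functions $\sum_i\lambda_i\chi_{B_i}$ attached to a fine partition; applying the isometric calculus turns this uniform approximation into a norm approximation of $T$ by the finite sums $\sum_i\lambda_i E_\mt T.(B_i)$, which are exactly the Riemann--Stieltjes sums for $\int\lambda\,dE_\mt T.(\lambda)$. For uniqueness, any spectral resolution $E'$ with $T=\int\lambda\,dE'(\lambda)$ satisfies $\int p\,dE'=p(T)$ for every polynomial $p$, hence $\int f\,dE'=f(T)$ for every continuous $f$ by Stone--Weierstrass; this forces $\langle E'(\cdot)x,y\rangle=\mu_{x,y}=\langle E_\mt T.(\cdot)x,y\rangle$ by the uniqueness in the Riesz representation, so $E'=E_\mt T.$.

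The step I expect to be hardest is the rigorous construction of the \emph{Borel} calculus, and in particular the verification that it remains multiplicative, $(fg)(T)=f(T)g(T)$, for bounded Borel $f,g$. The natural route is to prove this first for $f$ continuous and $g$ Borel, by identifying $\mu_{x,\,f(T)^*y}$ with $f\,\mu_{x,y}$, and then for both factors Borel by a dominated-convergence argument on the scalar measures; strong (as opposed to merely weak) countable additivity of $B\mapsto E_\mt T.(B)$ is the other delicate point, following from the orthogonality of the ranges of the projections attached to disjoint sets. Finally, for unbounded $T$ I would pass to the unitary Cayley transform $U=(T-i\1)(T+i\1)^{-1}$, apply the bounded spectral theorem to $U$, and transport the resulting projection-valued measure back along $\lambda\mapsto(\lambda-i)/(\lambda+i)$; the integral $\int\lambda\,dE_\mt T.(\lambda)$ must then be read as an improper, strongly convergent Riemann--Stieltjes integral over the unbounded spectrum.
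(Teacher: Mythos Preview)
The paper does not actually prove this theorem: it is an informal overview, and the spectral theorem is only \emph{stated}, with references to \cite{bReedI} and \cite[\S 17.4]{bNeumark90} for details. There is therefore no proof in the paper to compare your attempt against.

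That said, your outline is correct and is exactly the standard route taken in those references, and it is entirely in the spirit of the one hint the paper does give, namely that ``at the heart of the following results lies the structure theorem for commutative C*- and von Neumann algebras.'' You use the Gelfand isomorphism $C^*(T,\1)\cong C(\mr sp.(T))$ to obtain the continuous calculus, extend to the Borel calculus via the Riesz representation theorem applied to the scalar measures $\mu_{x,y}$, read off the spectral projections as $E_\mt T.(B)=\chi_B(T)$, recover $T$ as a norm limit of Riemann--Stieltjes sums by uniformly approximating the identity function by step functions, and handle the unbounded case by the Cayley transform. The two points you flag as delicate (multiplicativity of the Borel calculus and strong, rather than weak, $\sigma$-additivity of $E_\mt T.$) are indeed the places where the argument needs care, and your proposed treatment of both is the standard one.
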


\subsubsection{Decomposable operators}
In the analysis of finite operators 
(e.g. finite dimensional representations of a group) 
their decomposition into a direct sum of more fundamental pieces is 
an important step. As was seen in the preceding subsection 
the notion of a direct sum is too narrow to deal with more general
operators on infinite dimensional spaces. In this situation 
it is still possible to give a ``continuous'' decomposition using so-called
direct integrals, a technique that uses the theory of von Neumann 
algebras.
A direct integral is a generalization of the concept of direct sum
and may be applied to spaces as well as to operators.
For measure-theoretic details of direct integrals of Hilbert spaces and operators
see e.g.~\cite[Chapter~14]{bKadisonII},\cite{bTakesakiI,bDixmier77,bDixmier81}.

Let $(Z,d\mu)$ be a suitable measure space.
Denote by $\ot H.:=\Zint {\al H.(z)}$ the direct integral 
of the family of separable Hilbert spaces $\{\al H.(z)\}_{z\in Z}$ indexed by
points in $Z$ and with the corresponding measurability and convergence 
restrictions. It can be shown that $\ot H.$ is again a separable Hilbert
space. An operator $T\in\al L.(\ot H.)$ is decomposable with respect to 
$\Zint {\al H.(z)}$ if there is a function
$Z\ni z\to T(z)\in\al L.(\al H.(z))$ such that for each $x\in\ot H.$
we have $T(z)x(z)=(Tx)(z)$ for almost every $z\in Z$. 
In particular, if $T(z)=f(z)\1_{\al H.(z)}$ for 
some measurable scalar function $f$ we say that $T$ is diagonalizable. We denote
the set of decomposable (resp.~diagonalizable) operators by 
$\al R.$ (resp.~$\al D.$). The following result characterizes the set 
of decomposable operators in terms of commutants.

\begin{theorem}
The set of decomposable operators $\al R.$ coincides with the commutant of the
set of diagonalizable operators, i.e.
\[
 \al R.=\al D.'\;.
\]
\end{theorem}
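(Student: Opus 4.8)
The plan is to prove the two inclusions $\al R.\subseteq\al D.'$ and $\al D.'\subseteq\al R.$ separately; the first is essentially formal, while the second carries all of the analytic content. For the easy inclusion, suppose $T\in\al R.$ has fibres $T(z)$ and let $D\in\al D.$ with $D(z)=f(z)\1_{\al H.(z)}$ for some $f\in L^\infty(Z,d\mu)$. Since a scalar operator commutes with everything on each fibre, $(TD)(z)=f(z)T(z)=(DT)(z)$ for almost every $z$, whence $TD=DT$ and $T\in\al D.'$.

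For the converse I would first identify $\al D.$ with $L^\infty(Z,d\mu)$ acting by pointwise multiplication, $(\varphi x)(z)=\varphi(z)\,x(z)$, so that $T\in\al D.'$ means exactly $T(\varphi x)=\varphi\,(Tx)$ for all $\varphi\in L^\infty$ and $x\in\ot H.$. The key consequence to extract is \emph{locality}: taking $\varphi=\chi_E$, the characteristic function of a measurable set $E$, the identity $T(\chi_E x)=\chi_E\,(Tx)$ shows that if $x(z)=y(z)$ on $E$, then $(Tx)(z)=(Ty)(z)$ almost everywhere on $E$. Thus $(Tx)(z)$ depends only on the value $x(z)$.

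Next I would fix a fundamental sequence $\{x_n\}\subset\ot H.$ whose values $\{x_n(z)\}$ are dense in $\al H.(z)$ for almost every $z$, and define the candidate fibre $T(z)$ on this dense set by $T(z)\,x_n(z):=(Tx_n)(z)$; locality makes this well defined. To bound each fibre, I would localize once more: from $\int_E\|(Tx)(z)\|^2\,dz=\|\chi_E\,Tx\|^2=\|T(\chi_E x)\|^2\le\|T\|^2\int_E\|x(z)\|^2\,dz$ for every measurable $E$, one reads off $\|T(z)\,x(z)\|\le\|T\|\,\|x(z)\|$ almost everywhere, and applying this along the countable family $\{x_n\}$ yields $\|T(z)\|\le\|T\|$ for almost every $z$. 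Measurability of the field $z\mapsto T(z)$ is inherited from that of the vector fields $Tx_n$, since $z\mapsto T(z)\,x_n(z)=(Tx_n)(z)$ is measurable and the $x_n(z)$ are dense. Extending each $T(z)$ by continuity to all of $\al H.(z)$ gives a measurable, uniformly bounded operator field, and the associated decomposable operator $\Zint{T(z)}$ agrees with $T$ on $\{x_n\}$, hence on all of $\ot H.$; therefore $T\in\al R.$.

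The whole difficulty sits in this converse inclusion, and more precisely in the measure-theoretic bookkeeping. One must arrange the exceptional null sets so that well-definedness, the estimate $\|T(z)\|\le\|T\|$, and measurability all hold off a \emph{single} null set, and one must know that a fundamental sequence with almost-everywhere dense values exists for $\Zint{\al H.(z)}$ --- this is where the standing separability of the $\al H.(z)$ and $\sigma$-finiteness of $(Z,d\mu)$ enter, for instance by truncating a fundamental field of the measurable Hilbert-space structure with characteristic functions of sets of finite measure on which its norm is bounded.
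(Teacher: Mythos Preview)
The paper does not actually prove this theorem; it is an overview article, and the statement is quoted without proof, with the reader directed to \cite{bKadisonII,bTakesakiI,bDixmier77,bDixmier81} for the measure-theoretic details. So there is nothing in the paper itself to compare your argument against.

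That said, your proposal is correct and is precisely the standard proof found in those references (see in particular \cite[Theorem~14.1.10]{bKadisonII} or \cite[Part~II, Chapter~2, \S2]{bDixmier81}): the inclusion $\al R.\subseteq\al D.'$ is immediate from fibrewise commutation with scalars, and for the converse one first uses commutation with multiplication by characteristic functions to establish locality, then constructs the fibres $T(z)$ on a fundamental sequence of vector fields with almost-everywhere dense values, reads off the bound $\|T(z)\|\le\|T\|$ from the integral inequality you wrote, and checks measurability via the $Tx_n$. You have also correctly identified where the real work lies, namely in consolidating the countably many exceptional null sets and in the existence of a fundamental sequence, which is exactly where those references invoke separability of the fibres and $\sigma$-finiteness (or standardness) of the measure space.
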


Direct integrals allowed von Neumann to reduce the classification of
von Neumann algebras on separable Hilbert spaces to the classification
of so-called factors (i.e.~von Neumann algebras whose centers consist of
scalar multiples of the identity operator).  
In fact, any von Neumann $\mathcal M$ can be
decomposed with respect to its center as direct integral
\[
 \mathcal{M}=\Zint {\al M.(z)}\;,
\]
where $\al M.(z)$, $z\in Z$, are factors a.e.
\subsubsection{Unbounded operators}
Many interesting operators in applications like, e.g.,~Schr\"odinger
operators, are unbounded.  Even if operator algebras involve only
bounded operators, many families of unbounded operators are also
closely related to operator algebras. Let $T$ be a closed unbounded
operator. We say that $T$ on $\mathcal{H}$ is {\em affiliated} to a
von Neumann algebra $\al M.\subset\al L.(\al H.)$ if $UTU^{-1}=T$ for
every unitary $U\in\al M.'$. In this context we have the following
natural characterization: if $T=V\cdot|T|$ is the corresponding polar
decomposition of the closed operator, then $T$ is affiliated to $\al
M.$ iff $V\in\al M.\supset \{E_{\mt {|T|}.}(B)\mid B\subset\R^+ \;,\;
\mr Borel.\}$.  Moreover, it can be shown that an (unbounded) operator
is normal on a Hilbert space $\al H.$ iff it is affiliated to an
Abelian von Neumann algebra $\al A.$
(cf.~\cite[Theorem~5.6.18]{bKadisonI}). A symmetric operator
affiliated with a finite factor is automatically self-adjoint.

\begin{remark}
In particular Type II$_1$ von Neumann algebras were privileged by von Neumann,
because the unbounded operators affiliated with them 
allow elementary algebraic manipulations.
In fact, quoting his 1954 address to the 
International Congress of Mathematicians:
``...one can show that any finite number of them, in fact any 
countable number of them, are simultaneously defined on an everywhere
dense set; one can prove that one can indulge in operations
like adding and multiplying operators and one never gets into any
difficulty whatever. The whole symbolic calculus goes through.''
(see e.g.~\cite{Redei99}).
\end{remark}

\begin{exercise}
Let $T\colon\mathrm{dom}\,T\subset\ell_2\to\ell_2$
be the linear operator defined on the domain
\[
 \mathrm{dom}\,T:=\{a=(a_1,a_2,...)\in\ell_2 \mid Ta\in\ell_2\}
\] 
by means of $(Ta)_n:=na_n$, $n\in\N$. Show that $T$ is an unbounded
and closed operator.
\end{exercise}

\subsection{Operator algebras in harmonic analysis}
We apply here the techniques of direct integral decomposition to the theory
of unitary group representations.
Let $\mathcal{G}$ be a separable locally compact group and let $U$ be a continuous
unitary representation of $\mathcal{G}$ on a Hilbert space $\mathcal{H}$. Denote by
\begin{displaymath}
  \al M.:=\{U_g \mid g\in G\}'' 
\end{displaymath}
the von Neumann algebra generated by the representation $U$
and let
\begin{displaymath}
 \al M.'=(U,U):=\{M'\in\al B.(\al H.)\mid M'\, U_g=U_g\, M'\;,\;g\in \mathcal{G}\} 
\end{displaymath}
be the von Neumann algebra of intertwining operators for the representation
$V$. If $\al A.$ is an Abelian von Neumann subalgebra of $\al M.'$,
then there exists a compact, separable Hausdorff space $Z$, a regular 
Borel measure $d\mu$ on $Z$ and a unitary transformation onto a direct
integral Hilbert space
\begin{displaymath}
 \map F {\al H.}{\Zint {\al H.(z)}}\,,
\end{displaymath}
such that
\begin{displaymath}
 F\,\al A.\,F^{-1}=\{M_f\mid f\in L^\infty(Z,d\mu)\}
\end{displaymath}
($M_f$ being the multiplication operator with $f$) and
\begin{displaymath}
 F\, U_g \,F^{-1}= \Zint {U_g(z)}
\end{displaymath}
(see \cite[Section~14.8 ff.]{bWallachII}). There are several natural 
choices for the Abelian von Neumann algebra $\al A.$:
\begin{itemize}
\item[(i)] If $\al A.=\al M.\cap\al M.'$ is the center of $\al M.$, 
then, for a.e.~$z\in Z$, the von Neumann algebra generated by 
the representations $V(z)$ are factors,
i.e.
\begin{displaymath}
 \al M.(z)\cap\al M.(z)'
    :=\{U_g(z)\mid g\in G\}''\cap\{U_g(z)\mid g\in\mathcal{G}\}'=\C\1_{\al H.(z)}\,.
\end{displaymath} 
This choice is due to von Neumann.

\item[(ii)] If $\al A.$ is maximal Abelian in $\al M.'$, i.e.~$\al A.=\al
  A.'\cap\al M.'$, then the components $U(z)$ of the direct integral
  decomposition of $U$ are irreducible a.e.  This choice is due to Mautner.
\end{itemize}

Finally, we mention a class of groups, where the previous decomposition
results become particularly simple.  A group $\mathcal{G}$ is of type~I if all its
unitary continuous representations $U$ are of type~I, i.e.~each $U$ is
quasi-equivalent to some multiplicity free representation. Compact or Abelian
groups are examples of type~I groups. If $\mathcal{G}$ is of type~I, then the dual
$\widehat{\mathcal{G}}$ (i.e.~the set of all equivalence classes of continuous unitary
irreducible representations of $\mathcal{G}$) 
becomes a nice measure space (``smooth''
in the terminology of \cite[Chapter~2]{bMackey76}).  In this case one can take
$\widehat{\mathcal{G}}$ as the measure space $Z$ in the Mautner decomposition 
mentioned in the preceding item (ii).

\subsection{Operator algebras in quantum physics}\label{OAinQP}

The publication of the seminal books of Weyl, Wigner and
van der Waerden (cf.~\cite{bWeyl28,bWigner31,bWaerden32})
in the late twenties
show that quantum mechanics was using group theoretical methods
almost from its birth. 
A nice summary of this circle of ideas can be found
in \cite{Baumgaertel64}. Moreover, it
is suggested by Ulam in \cite[pp.~22-23]{Ulam58} that the 
spectral theorem and functional calculus are as fundamental to 
quantum mechanics, as infinitesimal calculus is for classical
mechanics. Therefore, operatoralgebraic methods are indirectly
present in quantum physics through the representation theory
of groups and functional analysis.
A direct application of operatoralgebraic methods in the
first years of quantum theory
was von Neumann's rigorous proof of the mathematical
equivalence of the two main competing formalisms at that time: the 
wave mechanics of Schr\"odinger and the matrix mechanics
of Born, Heisenberg and Jordan (see \cite{vNeumann31} or the 
review article \cite{SummersIn01}; for a 
thorough historical account on the equivalence problem see
\cite{Muller97}).

\begin{remark}
A brief historical introduction to the relation between the 
representation theory of groups and quantum mechanics 
is given in \cite[Section~1]{Landsman04}. 
In this paper the author 
also proposes $K$-theory for operator algebras as a 
new synthesis of these topics.
\end{remark}

\subsubsection{The C*-algebras of the canonical commutation/anticommutation 
relations}
There are two useful examples of C*-algebras that one can 
associate with systems of point particles in quantum mechanics.
For fermions resp.~bosons the generators of these algebras are
labeled by points of the even-dimensional Hilbert space $\mathcal{H}$
with scalar product $\langle\cdot,\cdot\rangle$. In certain 
cases the reference space $\mathcal{H}$ may be interpreted as the 
phase space of the quantum system.
\begin{itemize}
\item
The {\em CAR-algebra} is
the \cA{} that is associated to the canonical 
anticommutation relations. It is generated by operators 
$a(\varphi)$,
$\varphi\in\mathcal{H}$, satisfying 
\[
 a(\varphi_1) a(\varphi_2)^*+a(\varphi_2)^*
 a(\varphi_1)=\langle\varphi_1,\varphi_2\rangle \;\EINS\;.
\]
(More details on this algebra are given in \cite[Appendix]{pLledo08b}
and references cited therein.)
\item
The {\em CCR-algebra} is
the \cA{} that is associated to the canonical commutation relations.
It is generated by Weyl elements
$W_\varphi$, $\varphi\in\mathcal{H}$, satisfying
the Weyl form of the canonical commutation relations
\[
 W_{\varphi} \cdot W_{\psi}      
   = e^{-\frac{i}{2}\mr Im.\langle\varphi,\psi\rangle}\, W_{\varphi+\psi}
    \;,\quad \varphi,\psi\in\mathcal{H}
    \qquad {\rm (Weyl~relation) }\,.
\]
\end{itemize}

\begin{exercise}
Position and momentum operators in quantum mechanics:\\
Let $P$ and $Q$ be linear operators in a Hilbert space $\mathcal{H}$ which
satisfy the following commutation relations:
\[
 Q P - P Q = i \,\1 \qquad\qquad\mr (with~the~convention.~\hbar=1)\,.
\]
\begin{itemize}
\item[(i)] Show that the dimension of $\mathcal{H}$ can not be finite.
(Hint: Use the following identity of the trace
$\mr Tr.(AB)=\mr Tr. (BA)$.)
\item[(ii)]
Show that $P$ and $Q$ can not be both bounded operators
(Hint: Show the following relation by induction: $Q^{n}P - PQ^{n} =i \,n\,Q^{n-1}$,
$n\in\N$.)
\end{itemize}
\end{exercise}

\begin{remark}
The previous exercise suggests that the canonical commutation relations must be 
modified in order to express them in the context of C*-algebras. Typically one uses
bounded functions of the operators $P$ and $Q$. In fact, the Weyl relations
of are an ``exponentiated'' version of the canonical commutation relations 
(see e.g. \cite{bBratteli02,bPetz90}). 
Another
possibility is to use resolvents in order to encode the canonical commutation
relations (cf.~\cite{Buchholz08}).
\end{remark}

The preceding CAR- respectively CCR-algebras are used to model
fermionic respectively bosonic quantum systems,
in particular to describe free quantum fields. 
In this case the reference
space $\mathcal{H}$ becomes infinite dimensional and this introduces 
important differences in the representation theory of these algebras.

\subsubsection{Local quantum physics}
In quantum mechanics there are two fundamental notions: 
{\em observables} and {\em states} 
(see e.g.~\cite[Part~I]{bLandsman98}). 
One of the conceptual advantages of C*-algebras 
in the description of the quantum world
is the neat distinction between
the abstract algebra and its state space or the corresponding
representations on a concrete Hilbert space. This point of
view particularly pays off in quantum field theory,
where there is an abundance of inequivalent representations
(cf.~\cite{bEmch72}). In fact, Haag and Kastler 
proposed in the sixties an approach
to quantum field theory using the language of operator algebras.
In this context the observables become the primary objects of the 
theory and are described by elements in an abstract C*-algebra.
This approach is called nowadays algebraic quantum field theory or local 
quantum physics. More precisely, the central notion here is a net
of local C*-algebras indexed by
open and bounded regions in Minkowski space, i.e.~an assignment
\[
 \R^4\supset \Theta\mapsto\al A.(\Theta) \,,
\]
that satisfies certain natural properties
called Haag-Kastler axioms.
The elements of $\al A.(\Theta)$ are interpreted as physical
operations performable within the spacetime region $\Theta$.
This approach puts the concept of locality 
in the middle of synthesis of quantum mechanics and 
special relativity. In particular, 
causality is expressed in this context in the following 
natural way: if
$\Theta_1$ and $\Theta_2$ are space-like separated regions 
in Minkowski space, then $\al A.(\Theta_1)$ commutes elementwise
with $\al A.(\Theta_2)$ (see \cite{Haag64,bHaag92,bBaumgaertel95,Landsman96}
for further details). Non-local aspects in quantum field
theory like the notion of the vacuum, $S$-matrix etc. are related to
the states. Local quantum physics
complements other modern developments  
in relativistic quantum field theory and is 
particularly powerful in the analysis of structural questions
as well as for the rigorous treatment of models.
Algebraic quantum field theory has been very successfully 
applied in superselection theory, the theory that studies three
characteristic aspects of elementary particle physics:
composition of charges, classification of statistics and 
charge conjugation (cf.~\cite{DHR69a,DHR69b,RobertsIn90}).
For applications of Modular Theory to quantum field theory
see also \cite{pGuido08,pLledo08b} and references therein.
Further details, developments and references related to 
this approach to quantum field theory can be found in 
{\tt http://www.lpq.uni-goettingen.de}.




\providecommand{\bysame}{\leavevmode\hbox to3em{\hrulefill}\thinspace}

\end{document}